\def\subjclass#1{{\renewcommand{\thefootnote}{}%
\footnote{\emph{Mathematics Subject Classification (2020):} #1}}}
\DeclareMathOperator{\curl}{curl}
\DeclareMathOperator{\divg}{div}
\date{\today}
\theoremstyle{plain}
\newtheorem{Thm}{Theorem}
\newtheorem{Prop}[Thm]{Proposition}
\newtheorem{Rem}[Thm]{Remark}
\newtheorem{Lem}[Thm]{Lemma}
\newcommand {\p}{\partial}
\def\0{\mathbf 0}
\def\a{\alpha}
\def\d{\nabla}
\def\lam{\lambda}
\def\R{\mathbb R}
\def\u{\mathbf u}
\def\v{\vskip}
\numberwithin{equation}{section}
\numberwithin{Thm}{section}
\begin{document}
\large

\title[Liouville type theorem]
{An improved Liouville type theorem for Beltrami flows}

\author{Na Wang}
\author{Zhibing Zhang}

\address{Na Wang: School of Mathematics and Physics, Anhui University of Technology, Ma'anshan 243032, People's Republic of China; }
\email{wn2241750360@163.com}

\address{Zhibing Zhang: School of Mathematics and Physics, Anhui University of Technology, Ma'anshan 243032, People's Republic of China; }
\email{zhibingzhang29@126.com}

\thanks{}

\keywords{Liouville type theorem, Beltrami flows, monotonicity}

\subjclass{35Q31; 35Q60}

\begin{abstract}
In this note, we improved the Liouville type theorem for the Beltrami flows. Two different methods are used to prove it, one is the monotonicity method, and another is proof by contradiction. The conditions that we proposed on Beltrami flows are significantly weaker than previously known conditions.
\end{abstract}
\maketitle

\section{Introduction}

Beltrami flow on three dimensional whole space is described by the system
\begin{equation}\label{eq-bel}
  \left\{\begin{aligned}
  &\curl\u\times\u=\0 & \text{in }\mathbb{R}^3,\\
  &\divg\u=0 &\text{in }\mathbb{R}^3,
  \end{aligned}\right.
\end{equation}
where $\u=(u_1,u_2,u_3)$ is a three dimensional vector field.
Since $\curl\u\times\u=(\u\cdot\d)\u-\d|\u|^2/2$, each Beltrami flow gives a special solution to the stationary Euler system. We refer the reader to \cite{Arnold} for the basic properties of Beltrami flows and to \cite{CT20,EP15,EP16,EPT17,EPS18,ELP20} for some recent results. Here we mention that the Beltrami flows are also called force-free magnetic fields in magnetohydrodynamics since the term $\curl\u\times\u$ models the Lorentz force when $\u$ represents the magnetic field, see \cite{Ch,ChKe,Vainshtein}.

In \cite{EP15} and \cite{LLZ2015}, the authors independently constructed Beltrami flows which satisfy $\curl\u=\lam\u$ in $\R^3$ for nonzero constant $\lam$ and fall off as $|\u(x)|<C/|x|$ at infinity. In particular, they are in $L^p(\R^3)$ for all $p>3$. By transforming the Beltrami flow into a suitable tensor form, N. Nadirashvili \cite{Nadi14} proved a Liouville type theorem for $C^1$ smooth Beltrami flows under the condition either $\u\in L^p(\R^3)$, $p\in[2,3]$ or $|\u(x)|=o(|x|^{-1})$ as $|x|\to+\infty$. Subsequently, D. Chae and P. Constantin \cite{CC15} investigated Beltrami solutions of the stationary Euler equations and gave a new and elementary proof to a similar result which partially covers the result of N. Nadirashvili. Later, D. Chae and J. Wolf \cite{CW16} studied weak Beltrami flows, succeeded in covering all cases and got some significant improvements. Before showing their results, we need to introduce their notations of the normal part $\u_N$ and the tangential part $\u_T$ of a vector field $\u$, which are defined respectively as follows:
$$\u_N=\left(\u\cdot\frac{x}{|x|}\right)\frac{x}{|x|},\;\u_T=\frac{x}{|x|}\times\left(\u\times\frac{x}{|x|}\right)=\u-\u_N.$$
Specifically, they proved that if there exists a sequence $R_k\to+\infty$ such that
\begin{equation*}
\int_{\p B_{R_k}(\0)}|\u_T|^2 dS\to 0 \text{ as } k\to +\infty,
\end{equation*}
then the Beltrami flow must be trivial. As immediate consequences, Liouville type theorem holds under one of the following condtions:
\begin{equation}\label{cond1}
\aligned
&(i)\;|\u_T|=o(|x|^{-1}) \text{ as } |x|\to+\infty,\\
&(ii)\;\u_T\in L^p(\mathbb{R}^3,\mathbb{R}^3),\; p\in [2,3],\\
&(iii)\;|\u_T|^2/|x|^\alpha \in L^1(\mathbb{R}^3),\;\alpha\in (-\infty,1].
\endaligned
\end{equation}

Inspired by the assumptions that appear in Liouville type theorems for the stationary Navier-Stokes equations in \cite{Seregin18} and for elliptic equation of divergence form in \cite{BCX08}, and the work of D. Chae and J. Wolf \cite{CW16}, we establish Liouville type results for Beltrami flows under some similar assumptions. Surprisingly, we find that our assumptions are weaker than previous conditions for Beltrami flows.

We say $\mathbf{u}$ is a weak solution to \eqref{eq-bel} if $\mathbf{u}\in W_{loc}^{1,3/2}(\mathbb{R}^3,\mathbb{R}^3)$ satisfies \eqref{eq-bel} locally in the sense of almost everywhere.
Here we mention that $\curl\u\times\u$ belongs to $L^1_{loc}(\mathbb{R}^3,\mathbb{R}^3)$.

Now we state our Liouville type results as follows.

\begin{Thm}\label{liouville}
Let $\mathbf{u}$ be a weak solution to \eqref{eq-bel}. If one of the following conditions holds:
\begin{equation}\label{cond2}
\aligned
&(a) \lim_{R\rightarrow +\infty} \frac{1}{R^{3-p}}\int_{B_{2R}(\mathbf{0})\backslash \overline{B_R(\mathbf{0})}}|\u_T|^p dx=0, \;p\in [2,3],\\
&(b) \lim_{R\rightarrow +\infty} \frac{1}{\ln R}\int_{B_R(\0)\backslash \overline{B_1(\mathbf{0})}}\frac{|\u_T|^2}{|x|}dx=0,\\
&(c) \lim_{R\rightarrow +\infty} \frac{1}{\ln^{1-\beta}R}\int_{B_R(\0)\backslash \overline{B_e(\mathbf{0})}}\frac{|\u_T|^2}{|x|\ln^\beta|x|}dx=0,\;\beta\in (-\infty,1),\\
&(d) \lim_{R\rightarrow +\infty} \frac{1}{\ln\ln R}\int_{B_R(\0)\backslash \overline{B_e(\mathbf{0})}}\frac{|\u_T|^2}{|x|\ln|x|}dx=0,
\endaligned
\end{equation}
then $\u\equiv\0$.
\end{Thm}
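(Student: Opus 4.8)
The plan is to reduce every case to the criterion of Chae and Wolf recalled above \cite{CW16}: it suffices to produce a sequence $R_k\to+\infty$ along which $\int_{\p B_{R_k}(\0)}|\u_T|^2\,dS\to0$, and the triviality of $\u$ then follows at once. Throughout I would write $g(r)=\int_{\p B_r(\0)}|\u_T|^2\,dS$, which is a nonnegative $L^1_{loc}$ function of $r$ by the coarea formula, so that for any nonnegative weight $w$ and any $r_0\ge1$ one has $\int_{B_R(\0)\setminus\ol{B_{r_0}(\0)}}|\u_T|^2\,w(|x|)\,dx=\int_{r_0}^R w(r)\,g(r)\,dr$. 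In this way the whole argument is transferred onto the single scalar function $g$, and the genuine analytic difficulty (passing from smallness of the tangential energy to triviality of the flow) is entirely absorbed into the Chae--Wolf theorem.

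For cases (b), (c), (d) I would argue by contradiction, exploiting the monotonicity of $R\mapsto\int_{r_0}^R w(r)\,g(r)\,dr$. Suppose $\u\not\equiv\0$; then by the contrapositive of the Chae--Wolf criterion there is no sequence along which $g\to0$, so $\liminf_{r\to+\infty}g(r)>0$ (in the essential sense, since $g$ is defined only for a.e. $r$), giving $c>0$ and $R_1$ with $g(r)\ge c$ for a.e. $r\ge R_1$. Choosing $w(r)=1/r$ in (b), $w(r)=1/(r\ln^\beta r)$ in (c), and $w(r)=1/(r\ln r)$ in (d), an elementary computation shows that $\int_{r_0}^R w(r)\,dr$ equals, up to a positive constant and lower-order terms, the normalizing factors $\ln R$, $\tfrac{1}{1-\beta}\ln^{1-\beta}R$ (here $1-\beta>0$), and $\ln\ln R$ appearing in (b), (c), (d) respectively. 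Hence $\int_{r_0}^R w\,g\,dr\ge c\int_{R_1}^R w\,dr$, so the normalized quantity in each hypothesis stays bounded below by a positive constant as $R\to+\infty$, contradicting the assumption that it tends to $0$. Therefore $\liminf_{r\to+\infty}g(r)=0$, which furnishes the required sequence $R_k$.

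For case (a) I would instead construct the sequence directly. By averaging over the shell, for each $R$ one may select $r_R\in(R,2R)$ at which the spherical integral does not exceed its mean, i.e. $\int_{\p B_{r_R}(\0)}|\u_T|^p\,dS\le\frac1R\int_{B_{2R}(\0)\setminus\ol{B_R(\0)}}|\u_T|^p\,dx=o(R^{2-p})$ by hypothesis (a). Since $p\ge2$, H\"older's inequality on the sphere gives $g(r_R)\le\big(\int_{\p B_{r_R}(\0)}|\u_T|^p\,dS\big)^{2/p}(4\pi r_R^2)^{1-2/p}$; inserting the bound above together with $r_R<2R$, the powers of $R$ cancel exactly, the exponent being $\tfrac{2(2-p)}{p}+\tfrac{2(p-2)}{p}=0$, so that $g(r_R)=o(1)$. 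Taking $R=2^k$ and $R_k=r_{2^k}\to+\infty$ again produces the sequence demanded by the Chae--Wolf criterion.

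The main obstacle I anticipate is case (a): unlike (b)--(d), whose weights are tailored precisely so that the radial integral reproduces the stated normalization, (a) couples an $L^p$ integrand with the $L^2$ surface quantity controlled by Chae--Wolf, so one must interpolate through H\"older and verify that the scaling exponents cancel uniformly over the whole range $p\in[2,3]$, including the endpoints where the computation degenerates but remains valid. A secondary technical point, common to all cases, is to ensure that $g(r)$ and its $L^p$ analogue are finite for the radii used and that the coarea formula and the averaging selection are legitimate for the weak solutions under consideration; this is justified because the slicing holds for almost every $r$, which is all that is needed to extract the sequence.
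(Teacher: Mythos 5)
Your proof is correct, but it takes a genuinely different route from the paper. You reduce all four conditions to the Chae--Wolf criterion (existence of a sequence $R_k\to+\infty$ with $\int_{\p B_{R_k}(\0)}|\u_T|^2\,dS\to0$) and treat that theorem as a black box: for (b), (c), (d) you argue by contradiction from $\operatorname{ess\,liminf}_{r\to\infty}\int_{\p B_r(\0)}|\u_T|^2\,dS>0$ and integrate the weights, and for (a) you select a good sphere by averaging over the shell and apply H\"older there, checking that the scaling exponents cancel for all $p\in[2,3]$. The paper, by contrast, is self-contained: it re-derives the key identity \eqref{identity2} (via Lemma \ref{one-ball}, an exact version of the inequality Chae--Wolf had for their weaker solution class) and then gives two proofs, neither invoking \cite{CW16} --- the first introduces the monotone quantities $E_1,\dots,E_4$ of Proposition \ref{Prop1} and shows each must vanish identically once its limit is zero (with $p\in(2,3]$ reduced to $p=2$ by H\"older on shells), while the second is a contradiction argument structurally parallel to yours, except that the positive lower bound comes from the identity $\int_{B_R(\0)}|\u_N|^2/|x|\,dx\le\frac12\int_{\p B_R(\0)}|\u_T|^2\,dS$ rather than from the contrapositive of an external theorem, and case (a) integrates the sphere-H\"older bound over $R\in(R_1,2R_1)$ instead of averaging to pick a good radius. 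What your approach buys is brevity; what the paper's buys is independence from the precise weak-solution class of \cite{CW16} (a point you rightly flag: one must check that $W^{1,3/2}_{loc}$ solutions satisfying \eqref{eq-bel} a.e.\ lie in Chae--Wolf's class, which holds since $W^{1,3/2}_{loc}\hookrightarrow L^3_{loc}$ makes the distributional formulation available, and traces exist for a.e.\ radius), together with by-products the paper actually uses later: the exact identities and the monotonicity of the $E_i$ underpin the remarks computing the limits for the Enciso--Peralta-Salas and Lei--Lin--Zhou solutions and the observation that $|\u_T|^2$ in \eqref{cond2} can be weakened to $|\u_T|^2-|\u_N|^2$, none of which is recoverable from the black-box reduction.
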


\begin{Rem}
Since $|x|^\alpha$ and $R^\alpha$ can be controlled by each other for $x\in B_{2R}(\mathbf{0})\backslash \overline{B_R(\mathbf{0})}$, we see that the condition
$$\lim_{R\rightarrow +\infty} \frac{1}{R}\int_{B_{2R}(\mathbf{0})\backslash \overline{B_R(\mathbf{0})}}|\u_T|^2 dx=0$$
is equivalent to the condition
$$\lim_{R\rightarrow +\infty} \frac{1}{R^{1-\alpha}}\int_{B_{2R}(\mathbf{0})\backslash \overline{B_R(\mathbf{0})}}\frac{|\u_T|^2}{|x|^\alpha} dx=0, \text{ $\alpha\in (-\infty,0)\cup(0,1]$.}$$
\end{Rem}

\begin{Rem}
By a simple calculation, it's not difficult to check that the condition $(i)$ in \eqref{cond1} can be viewed as a special case of anyone in \eqref{cond2}.
Obviously, the condition $(ii)$ in \eqref{cond1} is stronger than the condition $(a)$, and the condition $(iii)$ in \eqref{cond1} is stronger than the conditions $(a)$ and $(b)$. In our setting, the following cases
$$\int_{\mathbb{R}^3}|\u_T|^p dx=+\infty\text{ and }\int_{\mathbb{R}^3}\frac{|\u_T|^2}{|x|^\alpha} dx=+\infty$$
are allowed to happen.
\end{Rem}

\begin{Rem}
For the non-trivial smooth Beltrami flow $\u$ constructed in \cite{EP15} or \cite{LLZ2015}, there exist two positive constants $C_1$ and $C_2$ such that
\begin{equation*}
\aligned
&(1) \lim_{R\rightarrow +\infty} \frac{1}{R}\int_{B_{2R}(\mathbf{0})\backslash \overline{B_R(\mathbf{0})}}|\u_T|^2 dx=C_1,\\
&(2) \lim_{R\rightarrow +\infty} \frac{1}{\ln R}\int_{B_R(\0)\backslash \overline{B_1(\mathbf{0})}}\frac{|\u_T|^2}{|x|}dx=C_2.
\endaligned
\end{equation*}
In fact, since $|\u(x)|\leq C/|x|$ for some positive constant $C$, by \eqref{identity2} we have
$$
\int_{B_R(\mathbf{0})}\frac{|\mathbf{u}_N|^2}{|x|}dx=\frac{1}{2}\int_{ \p B_R(\mathbf{0})}(|\mathbf{u}_T|^2-|\mathbf{u}_N|^2)dS\leq 2\pi C^2,
$$
which follows that
\begin{equation}\label{ineq1}
\frac{1}{R}\int_{B_{2R}(\mathbf{0})\backslash \overline{B_R(\mathbf{0})}}|\u_N|^2 dx\leq 2\int_{B_{2R}(\mathbf{0})\backslash \overline{B_R(\mathbf{0})}}\frac{|\u_N|^2}{|x|} dx\rightarrow 0\text{ as } R\rightarrow +\infty,
\end{equation}
\begin{equation}\label{ineq2}
\frac{1}{\ln R}\int_{B_R(\0)\backslash \overline{B_1(\mathbf{0})}}\frac{|\u_N|^2}{|x|}dx\leq \frac{1}{\ln R}\int_{B_R(\0)}\frac{|\u_N|^2}{|x|}dx\rightarrow 0\text{ as } R\rightarrow +\infty.
\end{equation}
On other hand, by a direct calculation, we find that the functions $E_1(R)$ and $E_2(R)$ given in Proposition \ref{Prop1} are bounded. Considering that $E_1(R)$ and $E_2(R)$ are monotonic, it implies immediately that
\begin{equation}\label{lim1}
\lim_{R\rightarrow +\infty}E_1(R) \text{ and }\lim_{R\rightarrow +\infty}E_2(R) \text{ exist.}
\end{equation}
Combining \eqref{ineq1}, \eqref{ineq2}, \eqref{lim1} and Theorem \ref{liouville}, we see $(1)$ and $(2)$ hold.
\end{Rem}

\begin{Rem}
Ericksen \cite{Ericksen} proved that a smooth unit vector field $\u$ satisfying $\curl\u=\u$ must be equal to
$(\cos x_3,\sin x_3,0)$ in an appropriate coordinate system. For more details on this interesting fact, see
also \cite{Ou}. For this special solution or unit constant vector solution to Beltrami flow, we have
\begin{equation*}
\aligned
&(1) \lim_{R\rightarrow +\infty} \frac{1}{R}\int_{B_{2R}(\mathbf{0})\backslash \overline{B_R(\mathbf{0})}}|\u_T|^2 dx=+\infty,\\
&(2) \lim_{R\rightarrow +\infty} \frac{1}{\ln R}\int_{B_R(\0)\backslash \overline{B_1(\mathbf{0})}}\frac{|\u_T|^2}{|x|}dx=+\infty.
\endaligned
\end{equation*}
Indeed, by \eqref{identity2} we observe that
$$
\int_{ \p B_R(\mathbf{0})}(|\mathbf{u}_T|^2-|\mathbf{u}_N|^2)dS=\frac{1}{R}\int_{ B_R(\mathbf{0})}|\mathbf{u}|^2dx
=\frac{4}{3}\pi R^2.
$$
Hence, it holds that
$$
\frac{1}{R}\int_{B_{2R}(\mathbf{0})\backslash \overline{B_R(\mathbf{0})}}|\u_T|^2 dx\geq E_1(R)= \frac{1}{R}\int_R^{2R}
\int_{\p B_r(\mathbf{0})}(|\mathbf{u}_T|^2-|\mathbf{u}_N|^2)dSdr=\frac{28}{9}\pi R^2,
$$
$$
\frac{1}{\ln R}\int_{B_R(\0)\backslash \overline{B_1(\mathbf{0})}}\frac{|\u_T|^2}{|x|}dx\geq E_2(R)=\frac{1}{\ln R}\int_1^{R}\int_{\p B_r(\mathbf{0})}\frac{|\mathbf{u}_T|^2-|\mathbf{u}_N|^2}{r}dSdr=\frac{2\pi(R^2-1)}{3\ln R}.
$$
\end{Rem}

\v0.1in

\section{Proof of Theorem \ref{liouville}}
Before proving our main results, we need two elementary but useful lemmas. They play a key role in the proof of the main theorem. The first lemma comes from \cite[Lemma 3.2]{ZZ18}. In
the original version, $\alpha\geq0$ and $\u\in H^1(D,\mathbb{R}^3)$. It is not difficult to check that this lemma still holds for $\alpha<0$ and $W^{1,q}(D,\mathbb{R}^3)$ functions, where $q\geq3/2$. Here, the range of $q\geq3/2$ is used to guarantee $\curl\u\times\u\in L^1(D,\mathbb{R}^3)$, $\u\divg\u\in L^1(D,\mathbb{R}^3)$ and $\u\in L^2(\p D,\mathbb{R}^3)$.

\begin{Lem}\label{one-ball}
Let $D$ be a bounded $C^1$ domain in $\mathbb{R}^3$ and $\nu$ denote the unit outer normal on the boundary $\p D$. Assume that $\0\notin \overline{D}$. Let $\mathbf{u}\in W^{1,q}(D,\mathbb{R}^3)$, where $q\geq3/2$. Then it holds that
\begin{equation*}
\aligned
&\int_{D}\curl\mathbf{u}\times\mathbf{u}\cdot \frac{x}{|x|^\alpha}+\left(\mathbf{u}\cdot \frac{x}{|x|^\alpha}\right)\divg \mathbf{u}\,dx\\
=&\int_{D}\frac{1}{|x|^\alpha}\left(\frac{1-\alpha}{2}|\mathbf{u}|^2
+\alpha|\u_N|^2\right)dx+ \int_{\partial D}\left(\mathbf{u}\cdot \frac{x}{|x|^\alpha}\right)(\mathbf{u}\cdot \nu)-\frac{|\mathbf{u}|^2}{2}\left(\frac{x}{|x|^\alpha}\cdot\nu\right)dS\\
=&\int_{D}\frac{1}{|x|^\alpha}\left(\frac{1-\alpha}{2}|\mathbf{u}|^2
+\alpha|\u_N|^2\right)dx+\int_{\partial D}\frac{|\mathbf{u}|^2}{2}\left(\frac{x}{|x|^\alpha}\cdot\nu\right)-\left(\mathbf{u}\times \frac{x}{|x|^\alpha}\right)\cdot(\mathbf{u}\times\nu)dS,
\endaligned
\end{equation*}
where $\alpha$ is a real number.
\end{Lem}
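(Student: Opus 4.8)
The plan is to prove the identity first for smooth vector fields by elementary vector calculus together with the divergence theorem, and then to recover the stated $W^{1,q}$ case by a density argument. Throughout I write $\mathbf{V}=x/|x|^\alpha$, which is smooth on $\overline{D}$ because $\0\notin\overline{D}$. The starting point is the pointwise identity $\curl\u\times\u=(\u\cdot\nabla)\u-\frac12\nabla|\u|^2$, so that, for smooth $\u$, the left-hand integrand equals $u_i(\p_iu_j)V_j-\frac12V_j\p_j|\u|^2+(\u\cdot\mathbf{V})\divg\u$ in index notation.

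First I would reorganize this integrand into divergences plus a bulk term. Using $u_i(\p_iu_j)V_j+(\p_iu_i)u_jV_j=\p_i(u_iu_jV_j)-u_iu_j\p_iV_j$ and $\frac12V_j\p_j|\u|^2=\p_j(\frac12|\u|^2V_j)-\frac12|\u|^2\divg\mathbf{V}$, the integrand becomes
\[
\divg\big[(\u\cdot\mathbf{V})\u\big]-\divg\big(\tfrac12|\u|^2\mathbf{V}\big)-u_iu_j\p_iV_j+\tfrac12|\u|^2\divg\mathbf{V}.
\]
The divergence theorem turns the first two terms into exactly the boundary integral $\int_{\p D}(\u\cdot\mathbf{V})(\u\cdot\nu)-\frac{|\u|^2}{2}(\mathbf{V}\cdot\nu)\,dS$ appearing in the first asserted form. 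For the bulk part I compute $\p_iV_j=\delta_{ij}|x|^{-\alpha}-\alpha x_ix_j|x|^{-\alpha-2}$, whence $\divg\mathbf{V}=(3-\alpha)|x|^{-\alpha}$ and, using $(\u\cdot x)^2=|x|^2|\u_N|^2$, $u_iu_j\p_iV_j=|x|^{-\alpha}(|\u|^2-\alpha|\u_N|^2)$. Substituting and noting $\frac{3-\alpha}{2}-1=\frac{1-\alpha}{2}$ collects the bulk integrand into $|x|^{-\alpha}\big(\frac{1-\alpha}{2}|\u|^2+\alpha|\u_N|^2\big)$, which is the first equality.

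The second equality is purely algebraic and is verified pointwise on $\p D$. By the identity $(\mathbf{a}\times\mathbf{b})\cdot(\mathbf{c}\times\mathbf{d})=(\mathbf{a}\cdot\mathbf{c})(\mathbf{b}\cdot\mathbf{d})-(\mathbf{a}\cdot\mathbf{d})(\mathbf{b}\cdot\mathbf{c})$ with $\mathbf{a}=\mathbf{c}=\u$, $\mathbf{b}=\mathbf{V}$, $\mathbf{d}=\nu$, one gets $(\u\times\mathbf{V})\cdot(\u\times\nu)=|\u|^2(\mathbf{V}\cdot\nu)-(\u\cdot\nu)(\u\cdot\mathbf{V})$; rearranging shows that the two boundary integrands coincide, which gives the second form.

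The main obstacle is the low regularity, since the manipulations above presuppose smoothness. I would resolve this by approximating $\u\in W^{1,q}(D,\mathbb{R}^3)$ in the $W^{1,q}$ norm by fields $\u_k\in C^\infty(\overline{D})$, applying the smooth identity to each $\u_k$, and passing to the limit. The hypothesis $q\ge 3/2$ is precisely what makes every term converge, with $q=3/2$ the critical case: the Sobolev embedding $W^{1,q}(D)\hookrightarrow L^{3}(D)$ together with H\"older's inequality (sharp at $q=3/2$, since $\frac{1}{3/2}+\frac13=1$) keeps $\curl\u\times\u$ and $\u\,\divg\u$ in $L^1(D)$ and gives convergence of the left-hand integrals, while the bulk quadratic terms converge because $\u_k\to\u$ in $L^2(D)$. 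For the boundary integrals I would invoke the trace embedding $W^{1,q}(D)\hookrightarrow L^2(\p D)$ (again critical at $q=3/2$), so that $|\u_k|^2$ and $(\u_k\cdot\mathbf{V})(\u_k\cdot\nu)$ converge in $L^1(\p D)$, using that $\mathbf{V}$ and $\nu$ are bounded on $\p D$. Granting these convergences, the smooth identity passes to the limit and yields the lemma in the stated generality.
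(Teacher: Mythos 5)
Your proof is correct and coincides in essence with the paper's treatment: the paper does not prove Lemma \ref{one-ball} itself but cites \cite[Lemma 3.2]{ZZ18} and asserts that the extension to $\alpha<0$ and to $W^{1,q}(D,\mathbb{R}^3)$, $q\geq 3/2$, is routine to check, and your divergence-theorem computation for smooth fields followed by a density argument is precisely that verification. In particular, your identification of where $q\geq 3/2$ enters --- H\"older with $\curl\u\in L^{3/2}$ and $\u\in L^{3}$ (Sobolev, critical at $q=3/2$) for the volume terms, and the critical trace embedding $W^{1,3/2}(D)\hookrightarrow L^{2}(\p D)$ for the boundary terms --- matches exactly the paper's own stated reasons for this threshold, namely that it guarantees $\curl\u\times\u\in L^{1}(D,\mathbb{R}^3)$, $\u\divg\u\in L^{1}(D,\mathbb{R}^3)$ and $\u\in L^{2}(\p D,\mathbb{R}^3)$.
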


The second lemma can be regarded as a corollary of the first one. D. Chae and J. Wolf \cite{CW16} established similar results for weak Beltrami flows. Since the lower local integrability condition proposed on $\u$, they obtained
a weaker formula
$$
\aligned
\int_{B_R(\mathbf{0})}\frac{|\mathbf{u}_N|^2}{|x|}dx\leq\frac{1}{2R}\int_{ B_R(\mathbf{0})}|\mathbf{u}|^2dx=\frac{1}{2}\int_{ \p B_R(\mathbf{0})}(|\mathbf{u}_T|^2-|\mathbf{u}_N|^2)dS.
\endaligned
$$

\begin{Lem}
Assume $\alpha\leq 1$. Let $\mathbf{u}$ be a weak solution to \eqref{eq-bel}. Then for any $R>0$ it holds that
\begin{equation}\label{identity1}
\aligned
\frac{\alpha-1}{2}\int_{B_R(\mathbf{0})}\frac{|\mathbf{u}|^2
}{|x|^\alpha}dx+\frac{R}{2}\int_{\partial B_R(\mathbf{0})}\frac{|\mathbf{u}|^2}{|x|^\alpha} dS=\alpha\int_{B_R(\mathbf{0})}\frac{|\u_N|^2}{|x|^\alpha}dx+R\int_{\partial B_R(\mathbf{0})}\frac{|\u_N|^2}{|x|^\alpha}dS.
\endaligned
\end{equation}
Moreover, if $\alpha=1$, then for any $R>0$ it holds that
\begin{equation}\label{identity2}
\aligned
\int_{B_R(\mathbf{0})}\frac{|\mathbf{u}_N|^2}{|x|}dx&=\frac{1}{R^2}\int_{ B_R(\mathbf{0})}|x||\mathbf{u}_T|^2dx=\frac{1}{2R}\int_{ B_R(\mathbf{0})}|\mathbf{u}|^2dx\\
&=\frac{1}{2}\int_{ \p B_R(\mathbf{0})}(|\mathbf{u}_T|^2-|\mathbf{u}_N|^2)dS.
\endaligned
\end{equation}
\end{Lem}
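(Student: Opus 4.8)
The plan is to derive both identities from Lemma~\ref{one-ball}, using that for a Beltrami flow the entire left-hand side of that lemma vanishes: by \eqref{eq-bel} we have $\curl\u\times\u=\0$ and $\divg\u=0$, so $\int_D \curl\u\times\u\cdot\frac{x}{|x|^\alpha}+(\u\cdot\frac{x}{|x|^\alpha})\divg\u\,dx=0$ on every admissible $D$. Since $\0\in B_R(\0)$ I cannot apply Lemma~\ref{one-ball} directly; instead I would apply it on the annulus $D=B_R(\0)\setminus\overline{B_\var(\0)}$, where the weight $|x|^{-\alpha}$ is smooth and the hypothesis $\0\notin\overline D$ holds. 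This turns the lemma (first boundary form) into $0=\int_D\frac{1}{|x|^\alpha}(\frac{1-\alpha}{2}|\u|^2+\alpha|\u_N|^2)\,dx+I_R+I_\var$, where $I_R,I_\var$ are the boundary contributions on $\partial B_R$ and $\partial B_\var$.

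Next I would evaluate the boundary pieces. On $\partial B_R$ the outer normal is $\nu=x/|x|$, and using $\u\cdot\frac{x}{|x|^\alpha}=|x|^{1-\alpha}(\u\cdot\frac{x}{|x|})$, $\u\cdot\nu=\u\cdot\frac{x}{|x|}$ and $\frac{x}{|x|^\alpha}\cdot\nu=|x|^{1-\alpha}$, the integrand collapses so that $I_R=R\int_{\partial B_R}\frac{|\u_N|^2}{|x|^\alpha}\,dS-\frac{R}{2}\int_{\partial B_R}\frac{|\u|^2}{|x|^\alpha}\,dS$. On $\partial B_\var$ the outer normal is $-x/|x|$, and the same computation gives $I_\var=\frac{\var^{1-\alpha}}{2}\int_{\partial B_\var}(|\u|^2-2|\u_N|^2)\,dS$. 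Rearranging the displayed equation is then exactly \eqref{identity1}, provided I can (i) send $I_\var\to0$ and (ii) pass the volume integral to the limit $\var\to0$.

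Steps (i)--(ii) are where the real work lies, and I expect the limit at the origin to be the main obstacle given only $\u\in W^{1,3/2}_{loc}$. For (i) I would bound $|I_\var|\le\frac{\var^{1-\alpha}}{2}\int_{\partial B_\var}|\u|^2\,dS$ and exploit the Sobolev embedding $W^{1,3/2}_{loc}\hookrightarrow L^3_{loc}$: on the dyadic shell $B_{2^{-j}}(\0)\setminus B_{2^{-j-1}}(\0)$, H\"older gives $\int|\u|^2\le C2^{-j}(\int|\u|^3)^{2/3}$, whence the coarea formula furnishes a radius $r_j\in(2^{-j-1},2^{-j})$ with $\int_{\partial B_{r_j}}|\u|^2\,dS\le C(\int|\u|^3)^{2/3}\to0$; since $\alpha\le1$ forces $r_j^{1-\alpha}\le1$, we get $I_{r_j}\to0$. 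For (ii), when $\alpha\le0$ the weight $|x|^{-\alpha}$ is bounded on $B_R(\0)$ and dominated convergence applies; when $0<\alpha\le1$ I would rewrite the integrand as $\frac{1}{|x|^\alpha}(\frac{1-\alpha}{2}|\u_T|^2+\frac{1+\alpha}{2}|\u_N|^2)\ge0$, so that monotone convergence, combined with the now-finite limit of $I_{r_j}$ and the finite $\partial B_R$ integrals, simultaneously proves that $\int_{B_R}\frac{|\u|^2}{|x|^\alpha}\,dx$ and $\int_{B_R}\frac{|\u_N|^2}{|x|^\alpha}\,dx$ are finite and justifies the passage to the limit. This establishes \eqref{identity1}.

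Finally, for \eqref{identity2} I would set $\alpha=1$ in \eqref{identity1}: the volume term carrying $\frac{\alpha-1}{2}$ drops out, and evaluating the surface integrals at $|x|=R$ yields the last equality $\int_{B_R}\frac{|\u_N|^2}{|x|}\,dx=\frac12\int_{\partial B_R}(|\u_T|^2-|\u_N|^2)\,dS$. To obtain the two remaining equalities I would set $F(R)=\int_{B_R}\frac{|\u_N|^2}{|x|}\,dx$; the coarea formula gives $F'(R)=\frac1R\int_{\partial B_R}|\u_N|^2\,dS$, and the surface identity gives $\int_{\partial B_R}|\u_T|^2\,dS=2F(R)+RF'(R)$. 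Integrating in the radial variable, $\int_{B_R}|\u|^2\,dx=\int_0^R(2F+2rF')\,dr=2RF(R)$ and $\int_{B_R}|x||\u_T|^2\,dx=\int_0^R(2rF+r^2F')\,dr=R^2F(R)$, the endpoint terms at $r=0$ vanishing because $rF(r),\,r^2F(r)\to0$. Dividing these by $2R$ and $R^2$ respectively reproduces $F(R)=\frac{1}{2R}\int_{B_R}|\u|^2\,dx=\frac{1}{R^2}\int_{B_R}|x||\u_T|^2\,dx$, which completes \eqref{identity2}.
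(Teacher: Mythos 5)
Your proof is correct, and it departs from the paper's exactly at the one delicate point: the passage $\var\to 0$ across the origin. The paper first proves $|\mathbf{u}|^2/|x|^\alpha\in L^1_{loc}$ \emph{a priori}, using a cutoff $\eta$ and the $W^{1,1}_0$ Hardy inequality $\bigl\||\mathbf{u}|^2\eta/|x|\bigr\|_{L^1}\le\frac12\bigl\|\nabla(|\mathbf{u}|^2\eta)\bigr\|_{L^1}$; it then writes the annulus identity as $I_1(R)=I_2(r)+rI_3(r)$ and, instead of selecting good radii, averages in $r$ over $(0,t)$, so the inner surface terms turn into volume integrals bounded by $C\int_{B_t(\0)}|\mathbf{u}|^2/|x|^\alpha\,dx\to0$ by absolute continuity of the integral. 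You instead extract a dyadic sequence $r_j\to0$ with $\int_{\partial B_{r_j}(\0)}|\mathbf{u}|^2\,dS\to0$ (via $W^{1,3/2}\hookrightarrow L^3$, H\"older, and the mean value/coarea argument) and recover the weighted local integrability \emph{a posteriori} from the identity itself, through the sign decomposition $\frac{1-\alpha}{2}|\mathbf{u}_T|^2+\frac{1+\alpha}{2}|\mathbf{u}_N|^2$ and monotone convergence for $0<\alpha\le1$ (dominated convergence for $\alpha\le0$). Both are sound: the paper's averaging trick needs no subsequence and treats all $\alpha\le1$ uniformly, while yours avoids Hardy's inequality entirely and yields finiteness of $\int_{B_R(\0)}|\mathbf{u}_N|^2/|x|^\alpha\,dx$ as an output rather than an input. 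One small caveat in your version: at $\alpha=1$ the coefficient $\frac{1-\alpha}{2}$ vanishes, so your limiting step gives finiteness of $\int_{B_R(\0)}|\mathbf{u}_N|^2/|x|\,dx$ but says nothing about $\int_{B_R(\0)}|\mathbf{u}|^2/|x|\,dx$; this does not affect \eqref{identity1}, since that term drops out, but the paper's Hardy step does give the stronger fact. For \eqref{identity2} your route is the paper's in different notation: the bookkeeping $\int_{\partial B_R(\0)}|\mathbf{u}_T|^2\,dS=2F(R)+RF'(R)$ with the exact derivatives $\frac{d}{dr}(2rF)$ and $\frac{d}{dr}(r^2F)$ is precisely the paper's pair of weak-derivative identities $\frac{d}{dR}\bigl(R\int_{B_R(\0)}|\mathbf{u}_N|^2/|x|\,dx\bigr)=\frac{d}{dR}\int_{B_R(\0)}|\mathbf{u}|^2/2\,dx$ and $\frac{d}{dR}\bigl(R^2\int_{B_R(\0)}|\mathbf{u}_N|^2/|x|\,dx\bigr)=\frac{d}{dR}\int_{B_R(\0)}|x||\mathbf{u}_T|^2\,dx$, with the same vanishing endpoint terms at $r=0$.
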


\begin{proof}
Let $\eta$ be a smooth function satisfying
$$0\leq\eta\leq 1\text{ in } \mathbb{R}^3,\quad\eta=1 \text{ in } B_R(\0),\quad\eta=0 \text{ outside } B_{R+1}(\0).$$
Since $\mathbf{u}\in W_{loc}^{1,3/2}(\mathbb{R}^3,\mathbb{R}^3)$, by Sobolev embedding and H\"{o}lder inequality we have
$$|\mathbf{u}|^2\in L^1_{loc}(\mathbb{R}^3),\;\nabla|\mathbf{u}|^2=2(u_1\nabla u_1+u_2\nabla u_2+u_3\nabla u_3)\in L^1_{loc}(\mathbb{R}^3,\mathbb{R}^3).$$
Hence, $|\mathbf{u}|^2\eta\in W_0^{1,1}(B_{R+1}(\0)).$
Using Hardy inequality, it follows that
$$\left\|\frac{|\mathbf{u}|^2}{|x|}\right\|_{L^1(B_{R}(\0))}\leq\left\|\frac{|\mathbf{u}|^2\eta}{|x|}\right\|_{L^1(B_{R+1}(\0))}
\leq \frac{1}{2}\left\|\nabla(|\mathbf{u}|^2\eta)\right\|_{L^1(B_{R+1}(\0))},$$
which guarantees
$|\mathbf{u}|^2/|x|^\alpha\in L^1_{loc}(\mathbb{R}^3)$, $\alpha\leq1$.

Let $0<r<R<+\infty$. Set $D=B_R(\mathbf{0})\backslash \overline{B_r(\mathbf{0})}$ in Lemma \ref{one-ball}, then we get
\begin{equation}\label{Rr-ball}
\aligned
&\int_{B_R(\mathbf{0})\backslash \overline{B_r(\mathbf{0})}}\frac{1}{|x|^\alpha}\left(\frac{1-\alpha}{2}|\mathbf{u}|^2
+\alpha|\u_N|^2\right)dx\\
=&-\int_{\partial B_R(\mathbf{0})\cup\partial B_r(\mathbf{0})}\left(\mathbf{u}\cdot \frac{x}{|x|^\alpha}\right)(\mathbf{u}\cdot \nu)-\frac{|\mathbf{u}|^2}{2}\left(\frac{x}{|x|^\alpha}\cdot\nu\right)dS.
\endaligned
\end{equation}
Then we rewrite \eqref{Rr-ball} in the following form:
$$
\aligned
&\int_{B_R(\mathbf{0})}\frac{1}{|x|^\alpha}\left(\frac{1-\alpha}{2}|\mathbf{u}|^2
+\a|\u_N|^2\right)dx+R\int_{\partial B_R(\mathbf{0})}\frac{1}{|x|^\alpha}\left(|\u_N|^2-\frac{|\mathbf{u}|^2}{2}\right)dS\\
=&\int_{B_r(\mathbf{0})}\frac{1}{|x|^\alpha}\left(\frac{1-\alpha}{2}|\mathbf{u}|^2
+\a|\u_N|^2\right)dx+r\int_{\partial B_r(\mathbf{0})}\frac{1}{|x|^\alpha}\left(|\u_N|^2-\frac{|\mathbf{u}|^2}{2}\right)dS.
\endaligned
$$
For simplicity, we denote the above equality by $I_1(R)=I_2(r)+rI_3(r)$, and then integrate this equality with respect to $r$ from $0$ to $t$, where $t>0$. Hence, we obtain
$$
\aligned
|I_1(R)|&\leq \frac{1}{t}\int_0^t|I_2(r)|dr+\int_0^t|I_3(r)|dr\\
&\leq\frac{3|\alpha|+1}{2}\int_{B_t(0)}\frac{|\mathbf{u}|^2}{|x|^\alpha}dx+
\frac{3}{2}\int_{B_t(0)}\frac{|\mathbf{u}|^2}{|x|^\alpha}dx.
\endaligned
$$
Since $|\mathbf{u}|^2/|x|^\alpha\in L^1_{loc}(\mathbb{R}^3)$, letting $t\rightarrow0^+$ we have $I_1(R)=0$, which gives \eqref{identity1}.

If $\alpha=1$, by \eqref{identity1} we get
\begin{equation}\label{eq1}
\int_{B_R(\mathbf{0})}\frac{|\u_N|^2}{|x|} dx+R\int_{\partial B_R(\mathbf{0})}\frac{|\u_N|^2}{|x|}dS=\int_{\partial B_R(\mathbf{0})}\frac{|\mathbf{u}|^2}{2}dS.
\end{equation}
Consequently, we can obtain the following equality (in the sense of weak derivative)
$$
\frac{d}{dR}\left(R\int_{B_R(\mathbf{0})}\frac{|\u_N|^2}{|x|}dx\right)=\frac{d}{dR}\left(\int_{ B_R(\mathbf{0})}\frac{|\mathbf{u}|^2}{2}dx\right),\\
$$
which implies
$$
R\int_{B_R(\mathbf{0})}\frac{|\mathbf{u}_N|^2}{|x|}dx=\frac{1}{2}\int_{ B_R(\mathbf{0})}|\mathbf{u}|^2dx.\\
$$
We rewrite \eqref{eq1} as the following form
$$
\int_{B_R(\mathbf{0})}\frac{|\u_N|^2}{|x|} dx+\frac{R}{2}\int_{\partial B_R(\mathbf{0})}\frac{|\u_N|^2}{|x|} dS=\frac{1}{2}\int_{\partial B_R(\mathbf{0})}|\mathbf{u}_T|^2dS.\\
$$
Then multiplying both sides of the above equality by $2R$, we obtain
$$
2R\int_{B_R(\mathbf{0})}\frac{|\u_N|^2}{|x|} dx+R^{2}\int_{\partial B_R(\mathbf{0})}\frac{|\u_N|^2}{|x|} dS=\int_{\partial B_R(\mathbf{0})}|x||\mathbf{u}_T|^2dS.\\
$$
Immediately, we have the following equality between two weak derivatives
$$
\frac{d}{dR}\left(R^{2}\int_{B_R(\mathbf{0})}\frac{|\u_N|^2}{|x|}dx\right)=\frac{d}{dR}\left(\int_{ B_R(\mathbf{0})}|x||\mathbf{u}_T|^2dx\right).
$$
Therefore, it follows that
$$
R^{2}\int_{B_R(\mathbf{0})}\frac{|\u_N|^2}{|x|}dx=\int_{ B_R(\mathbf{0})}|x||\mathbf{u}_T|^2dx.
$$
\end{proof}

In view of \eqref{identity2}, we see that the quantities
$$\frac{1}{R^2}\int_{ B_R(\mathbf{0})}|x||\mathbf{u}_T|^2dx,\;\frac{1}{R}\int_{ B_R(\mathbf{0})}|\mathbf{u}|^2dx,\;\int_{ \p B_R(\mathbf{0})}(|\mathbf{u}_T|^2-|\mathbf{u}_N|^2)dS$$
are non-decreasing in $R$. Actually, we find more monotonic quantities, which play an important role in the proof of Liouville type theorem.

\begin{Prop}\label{Prop1}
Let $\mathbf{u}$ be a weak solution to \eqref{eq-bel}. Then the quantities
\begin{equation*}
\aligned
&E_1(R)=\frac{1}{R}\int_{B_{2R}(\mathbf{0})\backslash \overline{B_R(\mathbf{0})}}(|\mathbf{u}_T|^2-|\mathbf{u}_N|^2) dx\;(R>0),\\
&E_2(R)=\frac{1}{\ln R}\int_{B_R(\0)\backslash \overline{B_1(\mathbf{0})}}\frac{|\mathbf{u}_T|^2-|\mathbf{u}_N|^2}{|x|}dx\;(R>1),\\
&E_3(R)=\frac{1}{\ln^{1-\beta}R}\int_{B_R(\0)\backslash \overline{B_e(\mathbf{0})}}\frac{|\mathbf{u}_T|^2-|\mathbf{u}_N|^2}{|x|\ln^\beta|x|}dx\;(R>e),\;\beta\in (-\infty,1),\\
&E_4(R)= \frac{1}{\ln\ln R}\int_{B_R(\0)\backslash \overline{B_e(\mathbf{0})}}\frac{|\mathbf{u}_T|^2-|\mathbf{u}_N|^2}{|x|\ln|x|}dx\;(R>e)
\endaligned
\end{equation*}
are non-decreasing in $R$.
\end{Prop}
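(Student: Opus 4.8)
The plan is to reduce all four monotonicity assertions to the behaviour of a single scalar function of the radius. I would set
$$\phi(r):=\int_{\partial B_r(\mathbf 0)}\big(|\mathbf u_T|^2-|\mathbf u_N|^2\big)\,dS,\qquad r>0.$$
The decisive first step is to invoke \eqref{identity2}, which gives $\phi(r)=2\int_{B_r(\mathbf 0)}|\mathbf u_N|^2/|x|\,dx$. Because the integrand is non-negative and $|\mathbf u_N|^2/|x|\in L^1_{\mathrm{loc}}(\mathbb R^3)$ (as established in the proof of the previous lemma), this representation shows at once that $\phi$ is locally absolutely continuous, \emph{non-negative}, and \emph{non-decreasing} on $(0,\infty)$. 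These two properties of $\phi$ are the only input the rest of the argument needs.

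Next I would pass to polar coordinates (the coarea formula) to write each $E_i$ as a weighted mean of $\phi$:
$$E_1(R)=\frac1R\int_R^{2R}\phi(r)\,dr,\qquad E_2(R)=\frac1{\ln R}\int_1^R\frac{\phi(r)}{r}\,dr,$$
$$E_3(R)=\frac1{\ln^{1-\beta}R}\int_e^R\frac{\phi(r)}{r\ln^\beta r}\,dr,\qquad E_4(R)=\frac1{\ln\ln R}\int_e^R\frac{\phi(r)}{r\ln r}\,dr.$$
For $E_1$ the substitution $r=Rs$ yields $E_1(R)=\int_1^2\phi(Rs)\,ds$, so the monotonicity of $\phi$ forces $E_1(R_1)\le E_1(R_2)$ whenever $R_1\le R_2$, directly under the integral sign.

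For $E_2$, $E_3$, $E_4$ I would run a single computation. Each has the form $E(R)=V(R)^{-1}\int_a^R\psi(r)\phi(r)\,dr$ with $V>0$ strictly increasing, $V(a)\ge 0$, and $\psi=V'/c$ for a positive constant $c$; explicitly $c=1$ and $V(a)=0$ for $E_2$ and $E_4$, while $c=1-\beta>0$ and $V(e)=1$ for $E_3$. Differentiating and using $V'=c\psi$ gives
$$E'(R)=\frac{\psi(R)}{V(R)^2}\left(\phi(R)\,V(R)-c\int_a^R\psi(r)\phi(r)\,dr\right).$$
Since $\phi$ is non-decreasing, $\int_a^R\psi\phi\le\phi(R)\int_a^R\psi=\phi(R)\,(V(R)-V(a))/c$, whence $\phi(R)\ge0$ and $V(a)\ge0$ yield $c\int_a^R\psi\phi\le\phi(R)V(R)$; therefore $E'(R)\ge0$.

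The place demanding care — and the only spot where the non-negativity of $\phi$ (not merely its monotonicity) is genuinely used — is $E_3$: there the normalizer $\ln^{1-\beta}R$ strictly exceeds $c\int_e^R\psi=\ln^{1-\beta}R-1$, so the leftover term $-V(e)=-1$ must be absorbed via $\phi(R)\ge0$. For $E_2$ and $E_4$ the weight integrates \emph{exactly} to the normalizer ($V(a)=0$), so monotonicity alone closes the estimate. A minor technical point to record is that $\phi$ is only locally absolutely continuous, so the differentiations above should be read in the a.e.\ sense and combined with the absolute continuity of $R\mapsto E_i(R)$; alternatively one can bypass differentiation and compare the integral means directly.
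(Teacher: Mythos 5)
Your proof is correct and follows essentially the same route as the paper: both arguments rest on \eqref{identity2} showing that $\phi(r)=\int_{\partial B_r(\mathbf{0})}(|\mathbf{u}_T|^2-|\mathbf{u}_N|^2)\,dS=2\int_{B_r(\mathbf{0})}|\mathbf{u}_N|^2/|x|\,dx$ is non-negative and non-decreasing, and then differentiate the weighted means, bounding the bulk integral by $\phi(R)$ times the integrated weight --- your residual term $-V(e)=-1$ for $E_3$, absorbed via $\phi(R)\ge 0$, is exactly the paper's leftover term $\frac{1}{(\ln^{1-\beta}R)^2}\frac{1}{R\ln^\beta R}\,\phi(R)$. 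The only cosmetic differences are that you package $E_2$, $E_3$, $E_4$ into one abstract computation with $V$, $\psi$, $c$, and treat $E_1$ by the substitution $r=Rs$ using only the monotonicity of $\phi$, whereas the paper differentiates $E_1$ directly and obtains the exact formula $E_1'(R)=\frac{2}{R^2}\int_{B_{2R}(\mathbf{0})\backslash\overline{B_R(\mathbf{0})}}|\mathbf{u}_N|^2\,dx$.
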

\begin{proof}
By a direct calculation and using \eqref{identity2}, we obtain
$$
\aligned
\frac{dE_{1}(R)}{dR}&=\frac{2}{R}\int_{\p B_{2R}(\mathbf{0})}(|\mathbf{u}_T|^2-|\mathbf{u}_N|^2)dS-\frac{1}{R}\int_{\p B_R(\mathbf{0})}(|\mathbf{u}_T|^2-|\mathbf{u}_N|^2) dS\\
&\;\;\;\;-\frac{1}{R^2}\int_{B_{2R}(\mathbf{0})\backslash \overline{B_R(\mathbf{0})}}(|\mathbf{u}_T|^2-|\mathbf{u}_N|^2) dx\\
&=\frac{1}{R^2}\left[\int_{B_{2R}(\mathbf{0})\backslash \overline{B_R(\mathbf{0})}}|\mathbf{u}|^2 dx-\int_{B_{2R}(\mathbf{0})\backslash \overline{B_R(\mathbf{0})}}(|\mathbf{u}_T|^2-|\mathbf{u}_N|^2) dx\right]\\
&=\frac{2}{R^2}\int_{B_{2R}(\mathbf{0})\backslash \overline{B_R(\mathbf{0})}}|\mathbf{u}_N|^2 dx\geq0.
\endaligned
$$
Clearly, $E_{1}(R)$ is non-decreasing in $R$.

After a direct calculation, we have
\begin{equation*}
\frac{dE_{2}(R)}{dR}=\frac{1}{(\ln R)^2}\left(\frac{\ln R}{R}\int_{\partial B_{R}(\mathbf{0})}(|\mathbf{u}_T|^2-|\mathbf{u}_N|^2)dS
-\frac{1}{R}\int_{B_{R}(\mathbf{0})\backslash \overline{B_1(\mathbf{0})}}\frac{|\mathbf{u}_T|^2-|\mathbf{u}_N|^2}{|x|} dx \right).
\end{equation*}
Owing to the monotonicity of
$$\int_{\partial B_{r}(\mathbf{0})}(|\mathbf{u}_T|^2-|\mathbf{u}_N|^2)dS,$$
it holds that
$$
\aligned
\int_{B_{R}(\mathbf{0})\backslash \overline{B_1(\mathbf{0})}}\frac{|\mathbf{u}_T|^2-|\mathbf{u}_N|^2}{|x|} dx&=\int_{1}^{R}\int_{\partial B_{r}(\mathbf{0})}\frac{|\mathbf{u}_T|^2-|\mathbf{u}_N|^2}{r} dSdr\\
&\leq\ln R\int_{\partial B_{R}(\mathbf{0})}(|\mathbf{u}_T|^2-|\mathbf{u}_N|^2)dS,
\endaligned
$$
which implies
$$
\frac{dE_{2}(R)}{dR}\geq0.
$$
Therefore $E_2(R)$ is also non-decreasing in $R$.

Similarly to the proof with respect to $E_2(R)$, we can derive that
$$
\aligned
\frac{dE_{3}(R)}{dR}&\geq\frac{1}{(\ln ^{1-\beta}R)^2}\frac{\ln^{1-\beta}R}{R\ln ^\beta R}\int_{\partial B_{R}(\mathbf{0})}(|\mathbf{u}_T|^2-|\mathbf{u}_N|^2)dS\\
&\;\;\;\;-\frac{1}{(\ln ^{1-\beta}R)^2}\frac{1-\beta}{R\ln ^\beta R}\int_{B_R(\0)\backslash \overline{B_e(\mathbf{0})}}\frac{|\mathbf{u}_T|^2-|\mathbf{u}_N|^2}{|x|\ln^\beta|x|}dx\\
&\geq\frac{1}{(\ln ^{1-\beta}R)^2}\frac{1}{R\ln ^\beta R}\int_{\partial B_{R}(\mathbf{0})}(|\mathbf{u}_T|^2-|\mathbf{u}_N|^2)dS\\
&\geq0,
\endaligned
$$
and
$$
\aligned
\frac{dE_{4}(R)}{dR}&\geq\frac{1}{(\ln\ln R)^2}\frac{\ln\ln R}{R\ln R}\int_{\partial B_{R}(\mathbf{0})}(|\mathbf{u}_T|^2-|\mathbf{u}_N|^2)dS\\
&\;\;\;\;-\frac{1}{(\ln\ln R)^2}\frac{1}{R\ln R}\int_{B_R(\0)\backslash \overline{B_e(\mathbf{0})}}\frac{|\mathbf{u}_T|^2-|\mathbf{u}_N|^2}{|x|\ln|x|}dx\\
&\geq0.
\endaligned
$$
Consequently, $E_3(R)$ and $E_4(R)$ are also non-decreasing in $R$.
\end{proof}

Now we are ready to prove our main results.

\begin{proof}[Proof of Theorem \ref{liouville}]
Firstly, we point out that the case $p=2$ of (a) is essential, since the case $p\in(2,3]$ can be converted into the case  $p=2$ by H\"{o}lder inequality.
In fact, for any $p>2$ we have
$$
\aligned
\frac{1}{R}\int_{B_{2R}(\mathbf{0})\backslash \overline{B_R(\mathbf{0})}}|\u_T|^2 dx&\leq\frac{1}{R}\left(\int_{B_{2R}(\mathbf{0})\backslash \overline{B_R(\mathbf{0})}}|\u_T|^p dx\right)^\frac{2}{p}\left(\int_{B_{2R}(\mathbf{0})\backslash \overline{B_R(\mathbf{0})}}dx\right)^{1-\frac{2}{p}}\\
&\leq\left(\frac{28\pi}{3}\right)^{1-\frac{2}{p}}\left(\frac{1}{R^{3-p}}\int_{B_{2R}(\mathbf{0})\backslash \overline{B_R(\mathbf{0})}}|\u_T|^p dx\right)^\frac{2}{p}.
\endaligned
$$
So it is sufficient to deal with the special case $p=2$ of (a) when we prove (a). Assume it holds that
$$\lim_{R\rightarrow +\infty} \frac{1}{R}\int_{B_{2R}(\mathbf{0})\backslash \overline{B_R(\mathbf{0})}}|\u_T|^2 dx=0.$$
From \eqref{identity2}, we see that $E_1(R)$ are nonnegative.
Since
$$0\leq E_1(R)\leq \frac{1}{R}\int_{B_{2R}(\mathbf{0})\backslash \overline{B_R(\mathbf{0})}}|\u_T|^2 dx,$$
we obtain
$$\lim_{R\rightarrow +\infty}E_1(R) =0.$$
By Proposition \ref{Prop1}, we have $E_1(R) =0$ for any $R>0$. Immediately, it follows that
$$
\int_{B_{2R}(\mathbf{0})\backslash \overline{B_R(\mathbf{0})}}(|\mathbf{u}_T|^2-|\mathbf{u}_N|^2) dx=0.
$$
Thanks to the nonnegativity of
$$\int_{\p B_r(\mathbf{0})}(|\mathbf{u}_T|^2-|\mathbf{u}_N|^2) dS,$$
we get
$$\int_{\p B_r(\mathbf{0})}(|\mathbf{u}_T|^2-|\mathbf{u}_N|^2) dS=0\text{ for any $r\in (R,2R)$.}$$
Substituting it into \eqref{identity2}, we get $\u \equiv\0$.

Similarly, we can derive the rest cases (b), (c) and (d).

\end{proof}

\begin{proof}[Another Proof of Theorem \ref{liouville}]

We first prove case (a). We claim that $\mathbf{u}_N \equiv0$.
If not, then we can choose $R_0$ sufficiently large such that\\
\begin{equation}\label{theta}
\aligned
\int_{B_{R_0}(\mathbf{0})} \frac{|\mathbf{u}_N|^2}{|x|} dx =\theta >0.
\endaligned
\end{equation}
Using \eqref{identity2} and applying H\"{o}lder inequality, we find that
\begin{equation*}
\aligned
\int_{B_{R}(\mathbf{0})} \frac{|\mathbf{u}_N|^2}{|x|} dx&\leq \frac{1}{2}\int_{ \p B_R(\mathbf{0})}|\mathbf{u}_T|^2dS\leq \frac{1}{2}(4\pi R^2)^{1-\frac{2}{p}}\left(\int_{\p B_R(\mathbf{0})}|\mathbf{u}_T|^p dS \right)^\frac{2}{p}.
\endaligned
\end{equation*}
Thus it follows that
\begin{equation}\label{eq2}
\aligned
 \frac{(2\theta)^\frac{p}{2}}{R^{p-2}}\leq (4\pi)^{\frac{p}{2}-1}\int_{ \p B_R(\mathbf{0})}|\mathbf{u}_T|^pdS\text{   for all $R\geq R_0$.}
\endaligned
\end{equation}
Integrating the above inequality with respect to $R$ from $R_1$ to $2R_1$, where $R_1>R_0$, we obtain
\begin{equation*}
\aligned
C(p)(2\theta)^\frac{p}{2}{R_1}^{3-p}\leq (4\pi)^{\frac{p}{2}-1}\int_{B_{2R_1}(\mathbf{0})\backslash \overline{B_{R_1}(\mathbf{0})}}|\mathbf{u}_T|^p dx,
\endaligned
\end{equation*}
where $C(p)=(2^{3-p}-1)/(3-p)$ if $p\in [2,3)$, $C(p)=\ln 2$ if $p=3$.
Then we get
\begin{equation*}
\aligned
C(p)(2\theta)^\frac{p}{2}\leq (4\pi)^{\frac{p}{2}-1}\frac{1}{{R_1}^{3-p}}\int_{B_{2R_1}(\mathbf{0})\backslash \overline{B_{R_1}(\mathbf{0})}}|\mathbf{u}_T|^p dx\rightarrow 0 \text{ as $R_{1}\rightarrow +\infty$.}
\endaligned
\end{equation*}
Therefore $\theta \leq 0$, which is a contradiction. So $\mathbf{u}_N \equiv0$. Substituting it into \eqref{identity2}, we get $\u \equiv\0$.

Next we prove case (b), (c) and (d). We claim that $\mathbf{u}_N \equiv0$. If not, then \eqref{theta} holds.
Using \eqref{identity2} and \eqref{theta}, we get
\begin{equation*}
\aligned
&\frac{\theta}{R} \leq\frac{1}{R} \int_{B_{R}(\mathbf{0})} \frac{|\mathbf{u}_N|^2}{|x|} dx\leq \frac{1}{2}\int_{ \p B_R(\mathbf{0})}\frac{|\mathbf{u}_T|^2}{|x|}dS,\\
&\frac{\theta}{R\ln^\beta R} \leq\frac{1}{R\ln^\beta R} \int_{B_{R}(\mathbf{0})} \frac{|\mathbf{u}_N|^2}{|x|} dx\leq \frac{1}{2}\int_{ \p B_R(\mathbf{0})}\frac{|\mathbf{u}_T|^2}{|x|\ln^\beta |x|}dS,
\endaligned
\end{equation*}
where $R\geq R_0$. Let $R_{0}< R_{1}$. Integrating the above inequalities with respect to $R$ from $R_{0}$ to $R_{1}$, respectively, it follows that\\
\begin{equation*}
\aligned
&\theta (\ln R_1-\ln R_0) \leq \frac{1}{2}\int_{B_{R_1}(\mathbf{0})\backslash \overline{B_{R_0}(\mathbf{0})}}\frac{|\mathbf{u}_T|^2}{|x|}dx,\\
&\frac{\theta\left(\ln^{1-\beta} R_1-\ln^{1-\beta} R_0\right)}{1-\beta}\leq \frac{1}{2}\int_{B_{R_1}(\mathbf{0})\backslash \overline{B_{R_0}(\mathbf{0})}}\frac{|\mathbf{u}_T|^2}{|x|\ln^\beta|x|}dx,\text{ if } \beta<1,\\
&\theta\left(\ln \ln R_1-\ln \ln R_0\right)\leq \frac{1}{2}\int_{B_{R_1}(\mathbf{0})\backslash \overline{B_{R_0}(\mathbf{0})}}\frac{|\mathbf{u}_T|^2}{|x|\ln |x|}dx,\text{ if } \beta=1.
\endaligned
\end{equation*}
Thus we have
\begin{equation*}
\aligned
&\theta\leq \frac{1}{2}\cdot\frac{\ln R_1}{\ln R_1-\ln R_0}\cdot\frac{1}{\ln R_1}\int_{B_{R_1}(\mathbf{0})\backslash \overline{B_{1}(\mathbf{0})}}\frac{|\mathbf{u}_T|^2}{|x|}dx,\\
&\theta\leq\frac{1}{2}\cdot\frac{(1-\beta)\ln^{1-\beta}R_1}{\left(\ln^{1-\beta} R_1-\ln^{1-\beta}R_0\right)}\cdot\frac{1}{\ln^{1-\beta}R_1}\int_{B_{R_1}(\mathbf{0})\backslash \overline{B_e(\mathbf{0})}}\frac{|\mathbf{u}_T|^2}{|x|\ln^\beta|x|}dx,\text{ if } \beta<1,\\
&\theta\leq \frac{1}{2}\cdot\frac{\ln \ln R_1}{\left(\ln \ln R_1-\ln \ln R_0\right)}\cdot\frac{1}{\ln \ln R_1}\int_{B_{R_1}(\mathbf{0})\backslash \overline{B_e(\mathbf{0})}}\frac{|\mathbf{u}_T|^2}{|x|\ln |x|}dx,\text{ if } \beta=1.
\endaligned
\end{equation*}
Letting $R_{1}\rightarrow +\infty$, we get a contradiction for each case. Routinely, we obtain $\u \equiv\0$.
\end{proof}

\begin{Rem}
Owing to the monotonicity of
$$\frac{1}{R^2}\int_{ B_R(\mathbf{0})}|x||\mathbf{u}_T|^2dx\text{  and  }\frac{1}{R}\int_{ B_R(\mathbf{0})}|\mathbf{u}|^2dx,$$
\eqref{theta} can be replaced by
$$
\aligned
&\text{if $\u_T\neq\0$, then there exists $R_0>0$ such that }\frac{1}{R_0^2}\int_{ B_{R_0}(\mathbf{0})}|x||\mathbf{u}_T|^2dx=\theta>0,\text{  or  }\\
&\text{if $\u\neq\0$, then there exists $R_0>0$ such that }\frac{1}{R_0}\int_{ B_{R_0}(\mathbf{0})}|\mathbf{u}|^2dx=\theta>0.
\endaligned
$$
\end{Rem}

\begin{Rem}
From the above two proofs, we see that the term $|\u_T|^2$ in \eqref{cond2} can be replaced by $|\u_T|^2-|\u_N|^2$.
\end{Rem}

\subsection*{Acknowledgements.}
Zhang would like to thank Professor Dong Ye for helpful discussions. The referees are thanked for valuable comments and suggestions that helped to improve the manuscript. This work was supported by the National Natural Science Foundation of China Grant No. 11901003 and Anhui Provincial Natural Science Foundation Grant No. 1908085QA28.

 \vspace {0.1cm}

\begin {thebibliography}{DUMA}

\bibitem{Arnold} V.I. Arnold, B.A. Khesin, {\it Topological Methods in Hydrodynamics}, Springer, 1998.

\bibitem{BCX08} H. Brezis, M. Chipot,  Y.T. Xie, {\it Some remarks on Liouville type theorems}, Recent advances in nonlinear analysis,  43-65, World Sci. Publ., Hackensack, NJ, 2008.

\bibitem{CC15} D. Chae, P. Constantin, {\it Remarks on a Liouville-type theorem for Beltrami flows}, Int. Math. Res. Not. IMRN 2015, no. 20, 10012-10016.

\bibitem{CW16} D. Chae, J. Wolf, {\it On the Liouville theorem for weak Beltrami flows}, Nonlinearity {\bf 29} (2016),  no. 11, 3417-3425.

\bibitem{Ch} S. Chandrasekhar, {\it On force-free magnetic fields}, Proc. Nat. Acad. Sci. U. S. A. {\bf 42} (1956), 1-5.

\bibitem{ChKe} S. Chandrasekhar, P.C. Kendall, {\it On force-free magnetic fields}, Astrophys. J. {\bf 126} (1957), 457-460.

\bibitem{CT20} J.N. Clelland, T. Klotz, {\it Beltrami fields with nonconstant proportionality factor}, Arch. Ration. Mech. Anal. {\bf 236} (2020), no. 2, 767-800.

\bibitem{EP15} A. Enciso, D. Peralta-Salas, {\it Existence of knotted vortex tubes in steady Euler flows}, Acta Math. {\bf 214} (2015), no. 1, 61-134.

\bibitem{EP16} A. Enciso, D. Peralta-Salas, {\it Beltrami fields with a nonconstant proportionality factor are rare}, Arch. Ration. Mech. Anal. {\bf 220} (2016), no. 1, 243-260.

\bibitem{EPT17} A. Enciso, D. Peralta-Salas,  F. Torres de Lizaur, {\it Knotted structures in high-energy Beltrami fields on the torus and the sphere}, Ann. Sci. \'{E}c. Norm. Sup\'{e}r. (4) {\bf 50} (2017),  no. 4, 995-1016.

\bibitem{EPS18} A. Enciso, D. Poyato, J. Soler, {\it Stability results, almost global generalized Beltrami fields and applications to vortex structures in the Euler equations}, Comm. Math. Phys. {\bf 360} (2018),  no. 1, 197-269.

\bibitem{ELP20} A. Enciso, A. Luque, D. Peralta-Salas, {\it Beltrami fields with hyperbolic periodic orbits enclosed by knotted invariant tori}, Adv. Math. {\bf 373}  (2020), 107328, 46 pp.

\bibitem{Ericksen} J.L. Ericksen, {\it General solutions in the hydrostatic theory of liquid crystals}, Transactions of the Society of Rheology, (1) {\bf 11} (1967), 5-14.

\bibitem{LLZ2015} Z. Lei, F.H. Lin, Y. Zhou, {\it Structure of helicity and global solutions of incompressible Navier-Stokes equation}, Arch. Ration. Mech. Anal. {\bf 218} (2015), no. 3, 1417-1430.

\bibitem{Nadi14} N. Nadirashvili, {\it Liouville theorem for Beltrami flow}, Geom. Funct. Anal. {\bf24} (2014), no. 3, 916-921.

\bibitem{Seregin18} G. Seregin, {\it Remarks on Liouville type theorems for steady-state Navier-Stokes equations}, Algebra i Analiz  {\bf 30}  (2018),  no. 2, 238-248;  reprinted in  St. Petersburg Math. J.  {\bf 30}  (2019),  no. 2, 321-328.

\bibitem{Ou} B. Ou, {\it Examinations on a three-dimensional differentiable vector field that equals its own curl}, Commun. Pure Appl. Anal. {\bf 2} (2003), no. 2, 251-257.

\bibitem{Vainshtein} S.I. Vainshtein,  {\it Force-free magnetic fields with constant alpha}, Topological aspects of the dynamics of fluids and plasmas (Santa Barbara, CA, 1991), 177-193, NATO Adv. Sci. Inst. Ser. E Appl. Sci., {\bf 218}, Kluwer Acad. Publ., Dordrecht, 1992.

\bibitem{ZZ18} Y. Zeng, Z.B. Zhang, {\it Applications of a formula on Beltrami flow}, Math. Methods Appl. Sci.  {\bf 41} (2018), no. 10, 3632-3642.

\end{thebibliography}

\end {document}